%% file: main.tex
\RequirePackage{plautopatch}
\documentclass[dvipdfmx,12pt]{amsart}

\usepackage[top=25truemm,bottom=20truemm,left=20truemm,right=20truemm]{geometry}
\usepackage[noadjust]{cite}
\usepackage[dvipsnames]{xcolor}
\usepackage{enumitem}
\usepackage[dvipdfmx]{graphicx} %画像の操作
\usepackage{amsmath,amssymb,amsfonts,amssymb,amscd} %数式関係
\usepackage{amsthm} %定理環境
\usepackage{thmtools}
\usepackage[all]{xy} %射の図式
\usepackage{mathtools}
\usepackage[dvipdfmx]{hyperref}
\usepackage{cleveref} % amsmathの後必須

\theoremstyle{plain}
\newtheorem{thm}{Theorem}[section]
\newtheorem{cor}[thm]{Corollary}

\newtheorem{prop}[thm]{Proposition}
\newtheorem{ques}[thm]{Question}
\theoremstyle{definition}
\newtheorem{dfn}[thm]{Definition}
\newtheorem{ex}[thm]{Example}
\newtheorem{rmk}[thm]{Remark}
\crefname{thm}{Theorem}{Theorems}
\crefname{cor}{Corollary}{Corollarys}
\crefname{lem}{Lemma}{Lemmas}
\crefname{prop}{Proposition}{Propositions}
\crefname{dfn}{Definition}{Definitions}
\crefname{ex}{Example}{Examples}
\crefname{rmk}{Remark}{Remarks}
\crefname{ques}{Question}{Questions}
\crefname{equation}{equation}{equations}

\newcommand{\As}{\mathrm{As}}
\newcommand{\ab}{\mathrm{ab}}
\newcommand{\qop}{\triangleleft}
\newcommand{\Conj}{\mathrm{Conj}}
\newcommand{\Aut}{\mathrm{Aut}}
\newcommand{\Inn}{\mathrm{Inn}}
\newcommand{\Qdl}{\mathrm{Qdl}}
\newcommand{\Grp}{\mathrm{Grp}}
\newcommand{\GAlex}{\mathrm{GAlex}}

\newcommand{\OO}{\mathcal{O}}
\date{\today}
\title{A condition of admissibility for generalized Alexander quandles} % タイトル
\author{Katsunori Arai, Keisuke Himeno, Ryoya Kai, and Yuko Ozawa} % 著者名
\subjclass[2020]{Primary 57K12; Secondary 22F30}
\keywords{Mathematics}
\address{
	(K. Arai)
    Kyoto Meitoku High School,
    3-8 Oehigashinagacho,
    Nishikyo-ku, Kyoto 610-1111, Japan
}
\email{arai1223math@gmail.com}

\address{
	(K. Himeno) 
    Faculty of Education,
    Yamaguchi University, 
    Yoshida, 
	Yamaguchi, 753-8511, Japan}
\email{himekei@yamaguchi-u.ac.jp}

\address{
	(R. Kai) 
    Center for Educational Research of Science and Mathematics,
	Nara University of Education, 
	Takabatake-Cho, Nara City, 
	Nara, 630-8301, Japan}
\email{kai.ryoya.d8@cc.nara-edu.ac.jp}

\address{
    (Y. Ozawa) 
    Independent Scholar}
\email{y.math46271246@gmail.com}

\begin{document}
%%% --- %%%
\input{0_abstract.tex}

\maketitle
%%%%%%%%%%%
%%%%%%%%%%%
\input{1_introduction.tex}
\input{2_notions.tex}
\input{3_galex.tex}
%%%%%%%%%%%
\section*{Acknowledgment}
The third author would like to express their sincere gratitude to Professors Ali Baklouti and Hideyuki Ishi, the organizers of the 8th Tunisian-Japanese Conference “Geometric and Harmonic Analysis on Homogeneous Spaces and Applications”.
The authors are also grateful
to Seiichi Kamada, Hiroaki Nagaya, Takayuki Okuda, Hiroshi Tamaru and Yuta Taniguchi for helpful comments and useful discussions.
The second author was supported by JST SPRING, Grant Number JPMJSP2132.
The third author was supported by JST SPRING, Grant Number JPMJSP2139,
and was partly supported by MEXT Promotion of Distinctive Joint Research Center Program JPMXP0723833165 and Osaka Metropolitan University Strategic Research Promotion Project (Development of International Research Hubs).
%%----%%
\bibliographystyle{alpha}
\bibliography{AdmissibleGAlex.bib}
%-----------------------------

%Please write the address of the author. 
	
\end{document}

%% file: 0_abstract.tex
\begin{abstract}
A typical example of a quandle is the conjugation quandle.
A quandle is said to be admissible if it is isomorphic to a conjugation quandle.
We study the admissibility problem for quandles, that is, determining whether a given quandle is admissible.
% isomorphic to a conjugation quandle.
In particular,
we focus on generalized Alexander quandles, 
which are groups equipped with quandle structures defined by group automorphisms.
% Focusing on generalized Alexander quandles associated with a group and its automorphism, 
In this paper,
we provide a sufficient condition for admissibility in terms of geometric properties of quandles:
% algebraic connectedness and antipodal sets: 
namely, if every antipodal set in each algebraically connected component consists of a single point, then the quandle is admissible. 
This result generalizes previous results on generalized Alexander quandles.
\end{abstract}

%% file: 1_introduction.tex
\section{Introduction}\label{sec:Intro}

% * カンドルは群の共役演算を公理化した二項演算を持つ代数系．
A quandle is an algebraic system originated from knot theory.
The algebraic structure is regarded as a generalization of the conjugation of groups.
The definition is as follows:
% - def: quandle
\begin{dfn}[\cite{Joyce-1982-ClassifyingInvariantKnotsKnota,Matveev-1982-DistributiveGroupoidsKnotTheorya}]\label{def:quandle}
  A \emph{quandle} is a non-empty set $X$ equipped with a binary operation $\qop$ satisfying the following properties:
  \begin{enumerate}
    \item[Q1] $x \qop x = x$ for any $x \in X$.

    \item[Q2] For any $y \in X$, the right multiplication $s_y: X \to X$ defined by $s_y(x) := x \qop y$ is a bijection. 

    \item[Q3] $(x \qop y) \qop z = (x \qop z) \qop (y \qop z)$ for any $x, y, z \in X$.
  \end{enumerate}
\end{dfn}

Any symmetric space becomes a quandle by its point symmetries \cite{Loos-1969-SymmetricSpacesGeneralTheorya,Joyce-1982-ClassifyingInvariantKnotsKnota}.
Thus, the map $s_y: X \to X$ is called a \emph{point symmetry} at $y \in X$.
Since quandle structures can be regarded as a generalization of the algebraic structure of point symmetries of symmetric spaces,
they are often regarded as discretizations of symmetric spaces.
The notions of homomorphisms, isomorphisms, and automorphisms are naturally defined.
We note that any point symmetry is a quandle automorphism by the axioms (Q2) and (Q3).
The group generated by the set of all point symmetries is called the \emph{inner automorphism group}, and is denoted by $\Inn(X)$. 
A subset $A$ of a quandle $X$ is a \emph{subquandle} if it satisfies $s_y^{\pm 1}(A) = A$ for any $y \in A$.
A typical example of quandle structures is given by the conjugation of groups.
% - ex: conjugation quandle
\begin{ex}\label{ex:conj}
  A group $G$ equipped with the binary operation $\qop$ defined by $x \qop y:=y^{-1} x y$ becomes a quandle denoted by $\Conj(G)$.
  A subquandle of $\Conj(G)$ for a group $G$ is called a \emph{conjugation quandle}.
  %A quandle is said to be \emph{admissible} if the quandle is isomorphic to some conjugation quandle.
  Note that a conjugation quandle consists of a union of conjugacy classes.
\end{ex}
This example motivates the following definition.
\begin{dfn}\label{def:admissible}
A quandle is said to be \emph{admissible} if the quandle is isomorphic to some conjugation quandle.
\end{dfn}
The term ``admissible'' is due to \cite{Kamada-2005-EnvelopingMonoidalQuandlesa}. See also Proposition~\ref{prop:adm_vs_conj}.

In knot theory, the fundamental group $G(K)$ is a classical and strong invariant and is often investigated by the representation to specific groups.
Similarly, we can define the fundamental quandle $Q(K)$ for each knot $K$, and homomorphisms from $Q(K)$ to a specific quandle $X$ provide invariants for $K$.
In particular, homomorphisms from $Q(K)$ provide stronger invariants than those  obtained from group representations of $G(K)$.
To obtain such a strong invariant, we must have a quandle which is not admissible.
Thus, we suggest the following question.

\begin{ques}[cf. {\cite[Question 3.1]{Bardakov-2017-AutomorphismGroupsQuandlesArising}}]\label{quaestion}
  Determine whether given quandles are admissible.
\end{ques}

We now give a simple sufficient condition for admissibility.
A quandle $X$ is said to be \emph{faithful} if the map $s: X \to \Inn(X)$ defined by $s(y):=s_y$, where $\Inn(X)$ denotes the group generated by all point symmetries, is injective.
Since the map $s: X \to \Conj(\Inn(X))$ is a quandle homomorphism by the third axiom of quandles, 
a faithful quandle $X$ is isomorphic to the subquandle of $\Conj(\Inn(X))$.
This shows that every faithful quandle is admissible (see also \cite[Lemma 2.10]{Grana-2011-NicholsAlgebras}.

In this paper, we focus on the admissibility of a special class of quandles, namely \emph{generalized Alexander quandles}, which play an important role in quandle theory.

\begin{dfn}\label{def:GAlex}
  Let $G$ be a group, and let $\sigma: G \to G$ be a group automorphism.
  Then, the binary operation $\qop$ on $G$ defined by $x \qop y := \sigma(xy^{-1})y$ is a quandle structure on $G$.
  The quandle $(G, \qop)$ is called the \emph{generalized Alexander quandle} and is denoted by $\GAlex(G, \sigma)$.
\end{dfn}

Some results on the admissibility of such quandles are already known:
Akita \cite{Akita-2023-EmbeddingAlexanderQuandlesGroupsa} proved that if $G$ is abelian, then the quandle $\GAlex(G, \sigma)$ is admissible for any group automorphism $\sigma: G \to G$.
Dhanwani--Raundal--Singh 
\cite[Proposition 3.12 (arXiv version)]{Dhanwani-2023-DehnQuandlesGroupsOrientable}
also gave the condition for generalized Alexander quandles to be faithful.

% Generalized Alexander quandles play an important role in quandle theory.
% For example, any homogeneous quandle, which is a quandle whose automorphism group transitively acts, is a quotient of some generalized Alexander quandle.
% It has already known some results for the admissibility of such quandles:
%Dhanwani--Raundal--Singh \cite[\Kai{Proposition 3.12 in arXiv ver.}]{Dhanwani-2023-DehnQuandlesGroupsOrientable}
 %give a condition of faithfulness for generalized Alexander quandles.
% \Himeno{Dhanwani--Raundal--Singh \cite[\Kai{Proposition 3.12 in arXiv ver.}]{Dhanwani-2023-DehnQuandlesGroupsOrientable}
% show that if $\sigma$ is a fixed-point free automorphism, then $\GAlex(G,\sigma)$ is admissible.
% (In fact, it is easy to see that $\sigma$ is fixed-point free if and only if $\GAlex(G,\sigma)$ is faithful. Hence, this follows from the above discussion.)}
%Gra\~na--Heckenberger--Vendramin \cite[Lemma 2.10]{Grana-2011-NicholsAlgebras} show that every faithful quandle is admissible. Moreover, \cite[\Kai{Proposition 3.12 in arXiv ver.}]{Dhanwani-2023-DehnQuandlesGroupsOrientable} reformulates this condition in the context of generalized Alexander quandles $\GAlex(G, \sigma)$ in terms of $\sigma$.} %automorphisms of the underlying group.
% Akita \cite{Akita-2023-EmbeddingAlexanderQuandlesGroupsa} shows that if $G$ is abelian, then the quandle $\GAlex(G, \sigma)$ is admissible for any group automorphism $\sigma: G \to G$.

In this paper, we give a new sufficient condition of admissibility for generalized Alexander quandles. 
The statement of the main result consists of some geometric notions for quandles, the connectedness (\cref{def:AlgConn}) and the antipodal set (\cref{def:antipodal}).

\begin{thm}\label{thm:main_thm}
  Let $X := \GAlex(G, \sigma)$ be a generalized Alexander quandle for a group $G$ and its group automorphism $\sigma: G \to G$.
  % If for any algebraic connected component, every antipodal set consists of a single point, then the quandle is admissible.
  % \Kai{If any algebraic connected component and any maximal antipodal set intersects a single point, then the quandle is admissible.}
  Let $P$ be the algebraic connected component containing the identity element $e \in G$,
  and let $H$ be the maximal antipodal set containing the identity element $e \in G$.
  If $H \cap P = \{e\}$, then the quandle $X$ is admissible.
\end{thm}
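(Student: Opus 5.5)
The plan is to embed $X$ into a product of two conjugation quandles. The first factor comes from a semidirect product and is injective exactly up to the fibres of $s$. The second factor is a trivial quandle that separates those fibres, using the hypothesis $H\cap P=\{e\}$. Throughout, write $s_y(x)=\sigma(xy^{-1})y=\sigma(x)\,\sigma(y)^{-1}y$.

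\emph{Step 1: identify $H$ and $P$.} First I would show that $H=\mathrm{Fix}(\sigma)$. Indeed, $s_e(x)=\sigma(x)$ and $s_x(e)=\sigma(x)^{-1}x$, so any antipodal set containing $e$ lies in $\mathrm{Fix}(\sigma)$. Conversely, for $a,b\in\mathrm{Fix}(\sigma)$ we get $s_b(a)=ab^{-1}b=a$.

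Next let $N$ be the subgroup generated by $\{\sigma(y)^{-1}y : y\in G\}$. A direct computation shows that $N$ is normal: $g^{-1}\sigma(y)^{-1}yg=\sigma(yg)^{-1}(yg)\cdot(\sigma(g)^{-1}g)^{-1}$. It is also $\sigma$-invariant. Every $s_y^{\pm1}$ has the form $x\mapsto\sigma^{\pm1}(x)n$ with $n\in N$, and $\sigma(x)\in Nx$. Hence $\Inn(X)$ preserves each coset $Nx$.

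I then expect to identify $P$ with $N$, or at least to show $P\supseteq N\cap(\text{whatever is needed})$. The orbit of $e$ contains all generators $\sigma(y)^{-1}y$, and composing point symmetries should produce products of them. This identification depends on the precise form of \cref{def:AlgConn}. It is the step I expect to be the main obstacle: one has to check that the connected component of $e$ is exactly the coset $N$ rather than something smaller. What is really needed below is only this: if $x,y$ lie in the same component, then $yx^{-1}\in P$.

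\emph{Step 2: the semidirect product factor.} Let $\widetilde G=G\rtimes\langle t\rangle$ with $tgt^{-1}=\sigma(g)$, and define $\varphi(x)=x^{-1}tx\in\Conj(\widetilde G)$. A routine computation shows that $\varphi$ is a quandle homomorphism. Moreover, $\varphi(x)=\varphi(y)$ if and only if $x^{-1}\sigma(x)=y^{-1}\sigma(y)$, which is equivalent to $yx^{-1}\in\mathrm{Fix}(\sigma)=H$. Equivalently, $s_x=s_y$ if and only if $y\in Hx$, so $\varphi$ has the same fibres as $s$.

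\emph{Step 3: the trivial quandle factor.} Define $\psi:X\to G/N$ by $x\mapsto Nx$, where $G/N$ carries the trivial quandle structure. This is a homomorphism, since $x\qop y=\sigma(x)\sigma(y)^{-1}y\in Nx$; equivalently, $\psi(x)$ is the connected component of $x$. A trivial quandle on a set $S$ is a conjugation quandle: take distinct elements of the free abelian group on $S$.

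\emph{Step 4: conclude.} The product $\varphi\times\psi:X\to\Conj(\widetilde G\times\mathbb{Z}^{(G/N)})$ is a quandle homomorphism into a conjugation quandle. It is injective: if $\varphi(x)=\varphi(y)$ and $\psi(x)=\psi(y)$, then $yx^{-1}\in H$ by Step 2 and $yx^{-1}\in N=P$ by Steps 1 and 3. The hypothesis $H\cap P=\{e\}$ then gives $x=y$. Therefore $X$ is isomorphic to its image, which is a subquandle of a conjugation quandle, so $X$ is admissible.
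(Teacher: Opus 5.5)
Your route differs from the paper's. The paper shows that $\eta_X(x)=\eta_X(y)$ forces $x,y$ into a common component and a common pole; you instead build an explicit injective homomorphism into $\Conj(\widetilde G\times\mathbb{Z}^{(G/N)})$. As written, however, there is a gap exactly where the hypothesis enters. Step 4 only yields $yx^{-1}\in H\cap N$, whereas the hypothesis concerns $H\cap P$, so you need $N\subseteq P$. What you actually proved is the opposite inclusion $P\subseteq N$, since $\Inn(X)$ preserves the cosets $Nx$. The inclusion $N\subseteq P$ is left as an expectation, and without it $H\cap P=\{e\}$ says nothing about $H\cap N$. Your proposed reduction (``same component implies $yx^{-1}\in P$'') does not match Step 4 either. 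Step 4 uses $Nx=Ny\Rightarrow yx^{-1}\in P$, and the ``equivalently'' in Step 3 already presupposes $P=N$.

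The missing idea is that the right translations $R_g$ are quandle automorphisms, hence permute components: $R_g(\OO_x)=\OO_{xg}$. For $b\in P$ this gives $R_b(P)=\OO_b=P$ and $R_{b^{-1}}(P)=R_{b^{-1}}(\OO_b)=\OO_e=P$. Hence $ab=R_b(a)\in P$ for $a\in P$, and $b^{-1}=R_{b^{-1}}(e)\in P$, so $P$ is a subgroup. Since it contains every $e\qop y=\sigma(y)^{-1}y$, you get $N\subseteq P$, hence $P=N$. This is what the paper imports as ``$P$ is a normal subgroup'' together with \cref{prop:conn_GAlex}. A more direct fix is to take the trivial factor to be $x\mapsto\OO_x\in\pi_0(X)$, which is always a homomorphism. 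Then $\OO_x=\OO_y$ gives $yx^{-1}=R_{x^{-1}}(y)\in R_{x^{-1}}(\OO_x)=\OO_e=P$, and $N$ is not needed at all.

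There are also two smaller slips. With your convention $tgt^{-1}=\sigma(g)$ one computes $\varphi(y)^{-1}\varphi(x)\varphi(y)=\varphi(\sigma^{-1}(xy^{-1})y)$, so $\varphi$ intertwines $\qop^{-1}$ rather than $\qop$. Use $t^{-1}gt=\sigma(g)$ instead; the fibres are unchanged. The normality identity should read $g^{-1}\sigma(y)^{-1}yg=(\sigma(g)^{-1}g)^{-1}\,\sigma(yg)^{-1}(yg)$.

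Once repaired, your proof is a concrete version of the paper's. The factor $\varphi$ detects poles, as $s\colon X\to\Inn(X)$ does, and the trivial factor detects components, as $\As(X)_{\ab}$ does. What you gain is an explicit ambient group, without ever using $\As(X)$.
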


This result can be regarded as a generalization of that in \cite{Dhanwani-2023-DehnQuandlesGroupsOrientable}.
% In fact, a generalized Alexander quandle is faithful if and only if any maximal antipodal set consists of a single point.
A proof of the main theorem is given in \cref{sec:GAlex}.

%% file: 2_notions.tex
\section{Some notions for quandles}

In this section, we introduce some notions for quandles.
First, we recall the definition of the associated group.
This group will be used to characterize the admissibility of quandles.
See \cite[\S 8.8]{Kamada-2017-SurfaceKnots4Spacea} for details.
% - def: associated group
\begin{dfn}\label{def:associated_group}
  For a quandle $X$, the \emph{associated group} $\As(X)$ of $X$ is the group defined by the following group presentation:
  \[
    \As(X) := \langle w_x \, (x \in X) \mid w_{x \qop y} = w_y^{-1} w_x w_y \, (x, y \in X)\rangle.
  \]
\end{dfn}
The map $\eta_X: X \to \As(X)$ is defined by $\eta_X(x) := w_x$ for $x \in X$, 
and  is called the \emph{natural map}.
This map plays an important role in this paper.
It is known that assigning a quandle $X$ to its associated group $\As(X)$ defines a functor from the category $\Qdl$ of quandles to the category $\Grp$ of groups \cite[Proposition 8.8.3]{Kamada-2017-SurfaceKnots4Spacea}.
The next proposition shows that the functor $\As: \Qdl \to \Grp$ is the left adjoint to the functor $\Conj: \Grp \to \Qdl$.
% - prop:universality_As
\begin{prop}[{\cite[Proposition 8.8.4]{Kamada-2017-SurfaceKnots4Spacea}}]\label{prop:adjointness_As}
  Let $X$ be a quandle, and let $G$ be a group.
  Then, any quandle homomorphism $f: X \to \Conj(G)$ uniquely induces a group homomorphism $f_*: \As(X) \to G$ such that the following diagram commutes:
  \[
  \xymatrix{
    X \ar[d]^-f \ar[r]^-{\eta_X} & \As(X) \ar[d]^-{f_*} \\
    \Conj(G) \ar@{=}[r]& G
  }
  \]
\end{prop}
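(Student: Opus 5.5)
The plan is to build $f_*$ directly from the presentation of $\As(X)$ in Definition~\ref{def:associated_group}. By von Dyck's theorem, a group homomorphism out of a presented group is the same thing as an assignment of the generators to elements of $G$ that satisfies every defining relator. So I will specify $f_*$ on the generators $w_x$ and then check the relations.

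First, consider the free group $F(X)$ on symbols $w_x$ for $x \in X$. Define $\tilde f : F(X) \to G$ on generators by $\tilde f(w_x) := f(x)$; by the universal property of free groups this extends uniquely to a homomorphism. Next, I check that $\tilde f$ kills each relator $w_y^{-1} w_x^{-1} w_y \, w_{x \qop y}$. Applying $\tilde f$ gives $f(y)^{-1} f(x)^{-1} f(y)\, f(x \qop y)$. Because $f$ is a quandle homomorphism into $\Conj(G)$ (see Example~\ref{ex:conj}), we have
\[
  f(x \qop y) = f(x) \qop f(y) = f(y)^{-1} f(x) f(y),
\]
so the image is trivial. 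Hence $\tilde f$ factors through the quotient $\As(X)$, and this factorization defines $f_*$. By construction $f_*(\eta_X(x)) = f_*(w_x) = f(x)$, so the diagram commutes.

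For uniqueness, suppose $g : \As(X) \to G$ is any group homomorphism with $g \circ \eta_X = f$. Then $g(w_x) = f(x) = f_*(w_x)$ for every $x \in X$. Since the elements $w_x$ generate $\As(X)$, two homomorphisms that agree on them agree everywhere, so $g = f_*$.

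There is no serious obstacle here; the only step with content is verifying the relation. The one point that needs care is matching the convention $x \qop y = y^{-1} x y$ in $\Conj(G)$ with the orientation of the relation $w_{x \qop y} = w_y^{-1} w_x w_y$ in $\As(X)$. These two conventions agree, so the check goes through.
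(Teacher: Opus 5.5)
Your proposal is correct. Defining $f_*$ on the generators by $w_x \mapsto f(x)$, checking that each relator dies because $f(x \qop y) = f(y)^{-1} f(x) f(y)$ in $\Conj(G)$, and getting uniqueness from the fact that the $w_x$ generate $\As(X)$ is the standard argument; the paper gives no proof of its own and simply cites Kamada's book (Proposition 8.8.4), which argues the same way.
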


% - def: admissibility
%\begin{dfn}[\cite{Kamada-2005-EnvelopingMonoidalQuandlesa}]\label{def:admissible}
%  A quandle $X$ is said to be \emph{admissible} if the natural map $\eta_X: X \to \As(X)$ is injective.
%\end{dfn}

Applying \cref{prop:adjointness_As}, we have the following proposition.
% - prop: admissible vs conjugation
\begin{prop}\label{prop:adm_vs_conj}
  For a quandle $X$, the following are equivalent:
  \begin{enumerate}
    \item[$1$] The quandle $X$ is admissible.
%    \item There exists an injective quandle homomorphism $X \to \Conj(G)$ for some group $G$.
     \item[$2$] The natural map $\eta_X: X \to \As(X)$ is injective.
  \end{enumerate}
\end{prop}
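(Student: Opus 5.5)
We have to prove both implications between (1) $X$ admissible and (2) $\eta_X$ injective, using \cref{prop:adjointness_As} for the harder direction.

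For (2) $\Rightarrow$ (1), I first check that $\eta_X$ is a quandle homomorphism $X \to \Conj(\As(X))$. This is immediate from the defining relations: $\eta_X(x \qop y) = w_{x\qop y} = w_y^{-1} w_x w_y = \eta_X(x) \qop \eta_X(y)$ in $\Conj(\As(X))$.

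Next I show that the image $\eta_X(X)$ is a subquandle, i.e.\ stable under $s_{w_y}^{\pm1}$ for $y\in X$. Stability under $s_{w_y}$ follows from the homomorphism property. For $s_{w_y}^{-1}$, take $x' := s_y^{-1}(x)$, which exists by (Q2). Then $w_{x'}\qop w_y = w_x$, so $w_y w_x w_y^{-1} = w_{x'}$ lies in the image.

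If $\eta_X$ is injective, it is therefore a bijective homomorphism onto a subquandle of $\Conj(\As(X))$. Its inverse is automatically a homomorphism, since the operation on the image is transported. Hence $X$ is isomorphic to a conjugation quandle, i.e.\ admissible by \cref{def:admissible}.

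For (1) $\Rightarrow$ (2), suppose $X$ is admissible. Then there is a group $G$ and an isomorphism from $X$ onto a subquandle of $\Conj(G)$. Composing with the inclusion gives an injective quandle homomorphism $f: X \to \Conj(G)$. By \cref{prop:adjointness_As} there is a group homomorphism $f_*: \As(X)\to G$ with $f = f_*\circ \eta_X$. Since $f$ is injective, so is $\eta_X$: if $\eta_X(x)=\eta_X(x')$, then $f(x)=f(x')$, hence $x=x'$.

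There is no real obstacle here. The only point needing care is the subquandle closure under $s^{-1}$ in the first direction, which uses the bijectivity axiom (Q2).
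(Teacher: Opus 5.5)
Your proof is correct and takes essentially the paper's route: the paper only says the equivalence follows from Proposition~\ref{prop:adjointness_As}, which is your factorization $f = f_*\circ\eta_X$ for (1)$\Rightarrow$(2). Your (2)$\Rightarrow$(1) is the same image-is-a-subquandle argument the paper uses for faithful quandles in the introduction, and your explicit check of closure under $s_{w_y}^{-1}$ via (Q2) fills in a detail the paper leaves implicit.
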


In \cite{Kamada-2005-EnvelopingMonoidalQuandlesa}, the admissibility of quandles is defined as the condition of \cref{prop:adm_vs_conj} (2).

% - faithfulの定義
%We can also define a map $s: X \to \Inn(X)$ by $s(y):=s_y$, where $\Inn(X)$ denotes the group generated by all point symmetries.
%\Kai{By the third axiom of quandles, the map $s: X \to \Conj(\Inn(X))$ is a quandle homomorphism.}
%A quandle $X$ is said to be \emph{faithful} if the map $s: X \to \Inn(X)$ is injective.
%\Himeno{As recalled in Section~\ref{sec:Intro}, it follows from \cite[Lemma 2.10]{Grana-2011-NicholsAlgebras} that every faithful quandle is admissible.}
%\KaiNote{命題から直ちに従うのにGHVを参照すると混乱しそうなので少し書き換えました．}
%\Kai{By \cref{prop:adm_vs_conj}, evety faithful quandle is admissible (see also \cite[Lemma 2.10]{Grana-2011-NicholsAlgebras})}.
%Since the map $s: X \to \Inn(X)$ is a quandle homomorphism,
%any faithful quandle is admissible. 

%Next, 
We introduce some geometric notions for quandles.
We note that the associated group $\As(X)$ acts on the quandle $X$ by $x \cdot w_y := x \qop y$ for $x \in X$ and $w_y \in \As(X)$.
We now recall the definition of algebraic connectivity for quandles.

% - def: connectivity of quandles(by As)
\begin{dfn}\label{def:AlgConn}
  Let $X$ be a quandle.
  An orbit of the action of $\As(X)$ on $X$ is called an \emph{algebraic connected component} of $X$.
  We denote the set of all connected components by $\pi_0(X)$.
  The quandle $X$ is said to be \emph{algebraically connected} if the action is transitive.
\end{dfn}
Note that connected components of a quandle are subquandles. 
Since the action of $\As(X)$ factors through the action of inner automorphism group, the notion of algebraic connectivity is sometimes defined by using the action of the inner automorphism group.
This notion has a relationship with admissibility as follows.

% - lem: admissibility vs connected component
\begin{prop}\label{prop:AlgConn_vs_Adm}
  Let $x$ and $y$ be elements in  a quandle $X$.
  If $\eta_X(x) = \eta_X(y)$ in $\As(X)$, 
  then the elements $x$ and $y$ are contained in the same algebraically connected component.
\end{prop}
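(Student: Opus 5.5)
We need to prove Proposition \ref{prop:AlgConn_vs_Adm}: if $\eta_X(x)=\eta_X(y)$ then $x,y$ lie in the same algebraically connected component. The plan is to build a quandle homomorphism from $X$ into a conjugation quandle that remembers components, and then apply \cref{prop:adjointness_As}.

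Take the group $G$ to be the free abelian group $\mathbb{Z}^{(\pi_0(X))}$ on the set of components, written additively with basis $\{e_C\}_{C\in\pi_0(X)}$. Then $\Conj(G)$ is a trivial quandle: $a \qop b = -b+a+b = a$. Define $f: X\to \Conj(G)$ by $f(z):=e_{[z]}$, where $[z]\in\pi_0(X)$ is the component containing $z$. We check that $f$ is a quandle homomorphism. We need $f(z\qop w)=f(z)\qop f(w)=f(z)$. This says $[z\qop w]=[z]$, which holds because $z\qop w = z\cdot w_w$ lies in the $\As(X)$-orbit of $z$.

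By \cref{prop:adjointness_As}, $f$ induces a group homomorphism $f_*:\As(X)\to G$ with $f_*\circ\eta_X=f$. If $\eta_X(x)=\eta_X(y)$, then $e_{[x]}=f(x)=f_*(\eta_X(x))=f_*(\eta_X(y))=f(y)=e_{[y]}$. Since distinct basis elements of a free abelian group are distinct, $[x]=[y]$, which proves the claim.

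No step is a genuine obstacle. The only point needing care is well-definedness of the action $x\cdot w_y := x\qop y$. Here $w_y$ must act by $s_y$, and this respects the defining relation of $\As(X)$ because $s_{x\qop y}=s_y^{-1}s_xs_y$ by (Q3) with the right-action convention. The paper already states this action, so it may be taken as given.
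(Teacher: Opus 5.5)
Your proof is correct and is essentially the paper's argument. The paper quotes the known isomorphism $\As(X)_{\ab} \cong \mathbb{Z}^{(\pi_0(X))}$, which sends $[\eta_X(x)]$ to $\OO_x$, whereas you build the needed homomorphism $\As(X) \to \mathbb{Z}^{(\pi_0(X))}$ directly from \cref{prop:adjointness_As}; this is the only part of that fact the argument uses, so your version is self-contained.
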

\begin{proof}
  We recall that the abelianization $\As(X)_{\ab}$ of $\As(X)$ is isomorphic to the free abelian group generated by $\pi_0(X)$.
  In particular, the element $[\eta_X(x)]$ in $\As(X)_{\ab}$ for $x \in X$ is equal to the element $1 \mathcal{O}_x$, where $\mathcal{O}_x$ is the algebraic connected component containing $x$.
  
  We assume that $\eta_X(x) = \eta_X(y)$ in $\As(X)$ for $x, y \in X$.
  Thus, we have $[\eta_X(x)] = [\eta_X(y)] \As(X)_{\ab}$,
  and hence $\OO_x = \OO_y$, as desired.
\end{proof}

Next, we introduce another geometric notion of quandles originated from the theory of symmetric spaces.
This notion was introduced for symmetric spaces in \cite{Chen-1988-RiemannianGeometricInvariantItsa}, 
and was generalized to quandles in \cite{Kubo-2022-CommutativityConditionSubsetsQuandlesa}.
% - def: antipodal set
\begin{dfn}\label{def:antipodal}
  Let $X$ be a quandle.
  \begin{enumerate}
    \item A subset $P \subset X$ is called a \emph{pole} if any two elements $x, y \in P$ satisfies that $s_x = s_y$.

    \item A subset $A \subset X$ is called an \emph{antipodal set} if any two elements $x, y \in A$ satisfies that $x \qop y = x$.
  \end{enumerate}
\end{dfn}

It is clear that a pole is an antipodal set.  
The notion of antipodal sets also relates to admissibility.

% - lem: admissibility vs antipodal set
\begin{prop}\label{prop:Ant_vs_Adm}
  Let $x$ and $y$ be elements in  a quandle $X$.
  If $\eta_X(x) = \eta_X(y)$ in $\As(X)$, then the subset $\{x, y\}$ is a pole.
  % \begin{enumerate}
  %   \item If $\eta_X(x) = \eta_X(y)$ in $\As(X)$ for $x, y \in X$, then the subset $\{x, y\}$ is a pole.
  %   \item A pole of $X$ is an antipodal set.
  % \end{enumerate}
\end{prop}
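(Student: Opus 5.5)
Since the pole condition only asks for $s_x = s_y$, the plan is to compare the actions of $w_x$ and $w_y$ on $X$. The action is $z \cdot w_x = z \qop x = s_x(z)$, so the point symmetry $s_x$ is exactly the permutation by which $\eta_X(x)$ acts on $X$. If $\eta_X(x) = \eta_X(y)$ in $\As(X)$, then these two group elements act on $X$ by the same permutation, and therefore $s_x = s_y$.

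The first step is to check that the assignment $x \cdot w_y := x \qop y$ really defines a right action of $\As(X)$ on $X$. By (Q2), each $s_y$ is a bijection, so sending a generator $w_y$ to $s_y$ gives a map from the free group on $\{w_x\}$ into the group of permutations of $X$. Writing maps on the right (so that $z \cdot (ab) = (z\cdot a)\cdot b$), it remains to confirm that the defining relations $w_{x\qop y} = w_y^{-1} w_x w_y$ are respected. This means verifying
\[
s_{x \qop y} = s_y^{-1} s_x s_y
\]
in right-action notation. Concretely, for every $z$ we need
\[
z \qop (x \qop y) = \bigl(s_y^{-1}(z) \qop x\bigr) \qop y .
\]
Put $u = s_y^{-1}(z)$. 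Then (Q3) gives
\[
(u \qop x)\qop y = (u\qop y)\qop(x\qop y) = z \qop (x \qop y),
\]
which is exactly what is needed. So the action is well defined. This is the only point that requires any verification, and it amounts to (Q3) plus bookkeeping of the order of composition.

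With the action in hand, suppose $\eta_X(x) = \eta_X(y)$, that is, $w_x = w_y$ in $\As(X)$. For every $z \in X$ we then have
\[
s_x(z) = z \cdot w_x = z \cdot w_y = s_y(z),
\]
so $s_x = s_y$. The set $\{x, y\}$ therefore satisfies the defining condition of a pole in \cref{def:antipodal}. This completes the proof. As a consequence, since a pole is an antipodal set, $x \qop y = x$ as well.
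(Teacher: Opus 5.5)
Your proof is correct and is essentially the paper's argument: the paper just observes that the action of $\As(X)$ on $X$ factors through $\Inn(X)$ via $w_y \mapsto s_y$, so $w_x = w_y$ forces $s_x = s_y$. Your check via (Q3) that the relations $w_{x \qop y} = w_y^{-1} w_x w_y$ are respected only makes explicit the well-definedness of that action, which the paper assumes without proof.
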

\begin{proof}
  Since the action of $\As(X)$ factors through that of $\Inn(X)$, the assertion holds.
  % The second one immediately follows by the first axiom of quandles.
\end{proof}
From \cref{prop:AlgConn_vs_Adm} and \cref{prop:Ant_vs_Adm}, we have the following corollary.
\begin{cor}\label{cor:geometric_condition}
  Let $X$ be a quandle.
  If any pole of $X$ and any connected component of $X$ intersect a single point, then the quandle $X$ is admissible.
\end{cor}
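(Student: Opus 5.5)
The plan is to verify condition (2) of \cref{prop:adm_vs_conj}: I will show that the natural map $\eta_X: X \to \As(X)$ is injective, and then conclude admissibility from that proposition. So I fix $x, y \in X$ with $\eta_X(x) = \eta_X(y)$ in $\As(X)$ and aim to prove $x = y$.

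The first step applies the two preceding propositions to this pair. \Cref{prop:AlgConn_vs_Adm} shows that $x$ and $y$ lie in the same algebraic connected component, which I call $\OO$. \Cref{prop:Ant_vs_Adm} shows that the two-element (or one-element) subset $P := \{x, y\}$ is a pole of $X$.

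The second step invokes the hypothesis. Since both $x$ and $y$ belong to $\OO$, we have $P \cap \OO = \{x, y\}$. The hypothesis says that any pole and any connected component meet in a single point. I read this as saying the intersection has at most one point; here it is automatically nonempty because it contains $x$. Hence $\{x, y\}$ is a singleton, so $x = y$. This proves that $\eta_X$ is injective, and \cref{prop:adm_vs_conj} gives that $X$ is admissible.

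There is no real obstacle in this argument. The only point needing care is the wording ``intersect a single point'': it must be understood as ``the intersection contains at most one point.'' The alternative reading, ``exactly one point,'' would fail for a pole disjoint from a component. Under the intended reading, the two propositions place $x$ and $y$ simultaneously in one pole and one component, and the hypothesis then forces them to coincide.
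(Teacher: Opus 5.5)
Your proposal is correct and matches the paper's argument, which derives the corollary directly from \cref{prop:AlgConn_vs_Adm} and \cref{prop:Ant_vs_Adm}: if $\eta_X(x)=\eta_X(y)$, then $x$ and $y$ lie in a common component and in the common pole $\{x,y\}$, so the hypothesis forces $x=y$, and \cref{prop:adm_vs_conj} gives admissibility. Reading ``intersect a single point'' as ``meet in at most one point'' is right, since under the ``exactly one'' reading the hypothesis could essentially never hold (for instance, a singleton pole misses every other component).
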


%% file: 3_galex.tex
\section{Admissibility for Generalized Alexander quandles}\label{sec:GAlex}

In this section, we investigate the generalized Alexander quandle and its admissibility.
The quandle is a group with a quandle structure given by a group automorphism (\cref{def:GAlex}). 

We now introduce some elementary properties for a generalized Alexander quandle.
In the following, we denote $X := \GAlex(G, \sigma)$, and the right multiplication of $g \in G$ as a group by $R_g$.
It is easy to show that $R_g \in \Aut(\GAlex(G, \sigma))$, and the map $R: G \to \Aut(\GAlex(G, \sigma))$ defined by $R(g) := R_g$ for $g \in G$ is an injective group homomorphism.
In particular, these quandles are homogeneous.
We denote the algebraic connected component of $X$ that contains the identity element $e \in G$ by $P$.
We note that the subset $P$ is a normal subgroup of $G$, and is a subquandle of $X$ (see \cite{Higashitani-2024-ClassificationGeneralizedAlexanderQuandlesa}).
% - prop:conn_GAlex
\begin{prop}\label{prop:conn_GAlex}
  The algebraic connected component containing $x \in X$ is equal to the left coset $xP$.
    % If two elements $x, y \in X$ are contained in the same algebraic connected component, then $x y^{-1} \in P$.
\end{prop}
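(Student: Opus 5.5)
The plan is to use the right multiplications $R_g$ to move the problem to the identity. Since each $R_g$ is a quandle automorphism of $X$, it maps algebraic connected components onto algebraic connected components. Because $R_g(e)=g$, the map $R_x$ sends the component $P$ of $e$ onto the component $\OO_x$ of $x$. So it suffices to show $\OO_x = Px$, and then rewrite this as $xP$ using the normality of $P$ in $G$, which the paper quotes from \cite{Higashitani-2024-ClassificationGeneralizedAlexanderQuandlesa}.

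The argument would run in three steps.

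\emph{Step 1: translation.} I will check directly that $R_g$ is a quandle automorphism:
\[
R_g(x)\qop R_g(y)=\sigma(xg g^{-1}y^{-1})yg=(x\qop y)g .
\]
Since $R_g$ is bijective with inverse $R_{g^{-1}}$, it is an automorphism.

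\emph{Step 2: automorphisms permute components.} For any automorphism $\phi$ of $X$, the relation $\phi(z\qop y)=\phi(z)\qop\phi(y)$ gives $\phi\circ s_y=s_{\phi(y)}\circ\phi$, and likewise for $s_y^{-1}$. The component of $z$ is the orbit of $z$ under the words $s_{y_1}^{\pm1}\cdots s_{y_k}^{\pm1}$, because the $\As(X)$-action factors through $\Inn(X)$. Hence $\phi$ maps the orbit of $z$ onto the orbit of $\phi(z)$. Applying this with $\phi=R_x$ and $z=e$ gives $\OO_x=R_x(P)=Px$.

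\emph{Step 3: normality.} Since $P$ is a normal subgroup of $G$, we have $Px=xP$, which gives the claim. If I want to avoid relying on the citation, I would verify the subgroup and normality properties directly. The key point is to show that $P$ is the set of elements reachable from $e$ via the maps $z\mapsto\sigma(zy^{-1})y$ and their inverses. One route is to note that $e\qop y=\sigma(y)^{-1}y$, so $P$ contains the elements $\sigma(y)^{-1}y$. One then shows that $P$ coincides with the normal subgroup generated by these elements, using the $R_g$-equivariance of Step 2 and the identity $x\qop y=\sigma(x)\,(e\qop y)$. This self-contained normality check is the only potentially laborious part. With the citation assumed, it is immediate, and the main content is Step 2.
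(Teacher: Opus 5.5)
Your proof is correct, but it takes a different route from the paper. You use right translations, which are genuine quandle automorphisms. Automorphisms permute algebraic components, so $\OO_x = R_x(P) = Px$, and you then need the cited normality of $P$ to rewrite this as $xP$.

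The paper instead works with left translations. These are \emph{not} automorphisms, but they are $\sigma$-twisted equivariant. From $x \qop z = \sigma(x)(e \qop z)$ one gets $s_z^{\varepsilon}(xw) = \sigma^{\varepsilon}(x)\, s_z^{\varepsilon}(w)$. Hence a word $s_{z_n}^{\varepsilon_n}\cdots s_{z_1}^{\varepsilon_1}$ applied to $x = x\cdot e$ gives $\sigma^{\sum \varepsilon_i}(x)$ times the word's value on $e$. Padding the word with trivial steps $w \qop^{\pm 1} w = w$ (axiom Q1) makes $\sum\varepsilon_i = 0$, so $\OO_x = x\OO_e$ follows directly.

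The paper's argument is thus self-contained: for this statement it needs neither normality of $P$ nor even that $P$ is a subgroup. Yours is shorter and more conceptual, since it is just homogeneity, but it imports normality from the literature. The two arguments together give $xP = Px$ for every $x$. Since $P$ is a subgroup (which also follows from $\OO_q = Pq$ for $q \in P$), this is a proof of the normality you cite.

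The self-contained normality check you sketch relies on exactly the identity $x \qop y = \sigma(x)(e \qop y)$. Once you have that identity, you could skip normality altogether and conclude $\OO_x = xP$ as the paper does.
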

\begin{proof}
  % First, we show $(1)$.
  Let us take $y \in \mathcal{O}_x$.
  Then, there exist $z_1, \dots, z_n \in X$ and $\varepsilon_i \in \{\pm 1\}$ such that 
  $y = ( \cdots (x \qop^{\varepsilon_1} z_1) \qop^{\varepsilon_2} \cdots \qop^{\varepsilon_{n-1}} z_{n-1}) \qop^{\varepsilon_n} z_n$.
  Here, we can assume $\sum_{i=1}^n \varepsilon_i = 0$ using the first axiom of quandles.
  Then, the following equation can be derived by induction on $n$:
  \begin{equation*}
    % \label{eq:GAlexP}
    ( \cdots (x \qop^{\varepsilon_1} z_1) \qop^{\varepsilon_2} \cdots \qop^{\varepsilon_{n-1}} z_{n-1}) \qop^{\varepsilon_n} z_n
    = x ( \cdots (e \qop^{\varepsilon_1} z_1) \qop^{\varepsilon_2} \cdots \qop^{\varepsilon_{n-1}} z_{n-1}) \qop^{\varepsilon_n} z_n \in x P.
  \end{equation*}
  Thus, we have $y \in xP$, and hence $\OO_x \subset xP$.
  Conversely, suppose that $y \in xP$.
 Using the same argument as before,
  it satisfies $y \in \OO_x$.
  Thus, we have $xP \subset \OO_x$, as desired.
  % Since the subset $P$ is a normal subgroup of $G$,
  % the assertion $(2)$ immediately follows by $(1)$.
  % This completes the proof.
\end{proof}

\begin{rmk}
  It is known that the normal subgroup $P$ is isomorphic to the \emph{displacement group} $\mathrm{Dis}(X)$, which is a group generated by $\{s_x \circ s_y^{-1} \mid x,y, \in X\}$.
\end{rmk}

Let $H$ be the subgroup consisting of all fixed points of the action of $\sigma$ on $G$, that is, 
\[
  H := \{g \in G \mid \sigma(g) = g\}.
\]
% - prop: H=max.anti.set
\begin{prop}\label{prop:max_anti_GAlex}
  % A subset $T \subset X$ containing $g \in X$ is antipodal if and only if $T \subset Hg$.
  The following hold:
  \begin{enumerate}
    \item[$(1)$] For a subset $T \subset X$ containing $g \in X$,
    the set $T$ is antipodal if and only if $T \subset Hg$.

    \item[$(2)$] A maximal antipodal set of $X$ containing $g \in G$ is equal to the subset $Hg$.

    \item[$(3)$] The subset $Hg$ is a pole of $X$.
    % If two elements $x, y \in X$ satisfy $\eta_X(x) = \eta_X(y)$, then $xy^{-1} \in H$.
  \end{enumerate}
\end{prop}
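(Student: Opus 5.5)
The whole proposition will follow from one computation: rewrite the antipodal condition $x \qop y = x$ in $X = \GAlex(G,\sigma)$ as the statement that $xy^{-1}$ is fixed by $\sigma$. By definition, $x \qop y = \sigma(xy^{-1})y$. Hence $x \qop y = x$ holds if and only if $\sigma(xy^{-1}) = xy^{-1}$, that is, if and only if $xy^{-1} \in H$. Since $\sigma$ is an automorphism, $H$ is a subgroup of $G$, so this condition is symmetric in $x$ and $y$.

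For $(1)$, suppose $T$ is antipodal and $g \in T$. For every $x \in T$ we have $x \qop g = x$, so $xg^{-1} \in H$ and therefore $x \in Hg$. Thus $T \subset Hg$. Conversely, suppose $T \subset Hg$. For $x = h_1 g$ and $y = h_2 g$ in $T$ we get $xy^{-1} = h_1 h_2^{-1} \in H$, so $x \qop y = x$ and $T$ is antipodal.

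For $(2)$, part $(1)$ applied with $T = Hg$ shows that $Hg$ is itself antipodal. Also by $(1)$, every antipodal set containing $g$ lies inside $Hg$. Hence $Hg$ is the unique maximal antipodal set containing $g$.

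For $(3)$, let $x = hg$ with $h \in H$; we must show $s_x = s_{g}$. For any $z$,
\[
s_x(z) = \sigma(z g^{-1} h^{-1})\, h g = \sigma(zg^{-1})\, h^{-1} h g = \sigma(zg^{-1})\, g = s_g(z),
\]
where the middle equality uses $\sigma(h^{-1}) = h^{-1}$. So all elements of $Hg$ have the same point symmetry, and $Hg$ is a pole. This is the only step that needs a real computation, and it is the one to check most carefully: one has to expand $\sigma(zx^{-1})$ as $\sigma(zg^{-1})\sigma(h^{-1})$ correctly and watch the order of the factors. Beyond that there is no serious obstacle.
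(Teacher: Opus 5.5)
Your proof is correct. For parts $(1)$ and $(2)$ it is essentially the paper's argument. The paper also reduces $x \qop g = x$ to $\sigma(xg^{-1}) = xg^{-1}$ to get $T \subset Hg$, checks directly that $Hg$ is antipodal, and deduces that $Hg$ is the maximal antipodal set containing $g$ from that same containment.

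For part $(3)$ your route differs, and yours is the one that actually proves the claim. The paper's displayed computation is $(hg)\qop(h'g) = \sigma(hh'^{-1})h'g = hg$. This only shows again that $Hg$ is antipodal, i.e.\ that $s_{h'g}$ fixes $hg$. A pole requires more: $s_{hg} = s_{h'g}$ as maps on all of $X$. Your computation $s_{hg}(z) = \sigma(zg^{-1})\sigma(h^{-1})hg = \sigma(zg^{-1})g = s_g(z)$ for arbitrary $z \in X$ supplies exactly this. So your argument fills a gap in the written proof rather than re-deriving it.

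This gap does not affect the main theorem, which only needs part $(1)$: $\eta_X(x)=\eta_X(y)$ already makes $\{x,y\}$ a pole, hence antipodal, hence contained in $Hx$.
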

\begin{proof}
  We first show $(1)$.
  We assume that a subset $T$ containing $g$ is antipodal.
  Let us take $x \in X$.
  Then, it satisfies $x = x \qop g = \sigma(x g^{-1}) g$,
  and hence $xg^{-1} \in H$.
  Thus, we have $x \in Hg$.
  This shows that $T \subset Hg$.
  To show the inverse, we prove that the subset $Hg$ is antipodal.
  For $hg, h'g \in H g$, we have
  \[
    hg \qop^{\pm} h'g = \sigma^{\pm 1}(hg(h'g)^{-1}) h'g = \sigma^{\pm 1}(h h'^{-1})h'g = h h'^{-1}h'g = hg,
  \]
  as desired.

  Next, we show $(2)$.
  To show this, it suffices to show that $Hg$ is a maximal antipodal set by $(1)$.
  Suppose that $T$ is a trivial subquandle of $X$ with $Hg \subset T$.
  Let us take $t \in T$.
  Since $T$ is a trivial quandle containing $g \in T$, it satisfies the following:
  \[
    t = t \qop g = \sigma(tg^{-1})g.
  \]
  Thus, we have $\sigma(tg^{-1}) = tg^{-1}$.
  This shows $tg^{-1} \in H$, and hence $t \in Hg$.
  Therefore, we conclude that $T = Hg$, as desired.

  Finally, we show $(3)$.
  For $hg, h'g \in Hg$, we have
  \begin{align*}
      (hg) \qop (h'g) = \sigma((hg)(h'g)^{-1})h'g = \sigma(hh'^{-1})h'g = hh'^{-1}h'g = hg,
  \end{align*}
  which completes the proof.
\end{proof}

We now give a proof of the main theorem.
% - proof of main theorem
\begin{proof}[Proof of \cref{thm:main_thm}]
  Assume that $\eta_X(x) = \eta_X(y)$ for $x, y \in X$.
  Since $xP = yP$ and $Hx = Hy$ by \cref{prop:conn_GAlex}, \cref{prop:max_anti_GAlex} and \cref{cor:geometric_condition}, and the subset $P$ is a normal subgroup of $P$,
  we have $xy^{-1} \in P \cap H$.
  By the assumption, the intersection $P \cap H$ is equal to the trivial subgroup $\{e\}$ of $G$.
  Thus, we have $xy^{-1} = e$.
  Therefore, we obtain $x = y$, and this shows that the map $\eta_X: X \to \As(X)$ is injective.
  This completes the proof.
\end{proof}

\cref{thm:main_thm} can be regarded as a generalization of the result by Dhanwani--Raundal--Singh \cite{Dhanwani-2023-DehnQuandlesGroupsOrientable}.
They proved that if $H = \{e\}$, then the generalized Alexander quandle is faithful.
The condition $H = \{e\}$ means that the maximal antipodal set containing $e$ consists of a single point.
We now show that this condition also implies connectedness of the generalized Alexander quandle.
Consequently, the result of \cite{Dhanwani-2023-DehnQuandlesGroupsOrientable} can be viewed as a special case of our main result for connected generalized Alexander quandles.

\begin{prop}
  Let us define a map $\psi_\sigma: X \to P$ by $\psi_\sigma(g) := e \qop g = \sigma(g^{-1})g$.
  Then, the following hold:
  \begin{enumerate}
      \item[$(1)$] For elements $g_1, g_2 \in X$, we have $\psi_\sigma(g_1) = \psi_\sigma(g_2)$ if and only if $g_1 g_2^{-1} \in H$. 
      \item[$(2)$] If $H = \{e\}$ and the quandle $X$ is finite, then $X$ is connected.
  \end{enumerate}
\end{prop}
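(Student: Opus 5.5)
First I would check that $\psi_\sigma$ really lands in $P$. By definition $\psi_\sigma(g) = e \qop g$, so $\psi_\sigma(g)$ lies in the algebraic connected component of $e$, which is $P$. (Equivalently, $\psi_\sigma(g) = e\qop g \in \OO_e = eP = P$ by \cref{prop:conn_GAlex}.) So the map is well defined.

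For $(1)$ I would compute directly. The condition $\psi_\sigma(g_1)=\psi_\sigma(g_2)$ reads $\sigma(g_1^{-1})g_1 = \sigma(g_2^{-1})g_2$. Multiplying on the left by $\sigma(g_2)$ and on the right by $g_1^{-1}$ turns this into
\[
\sigma(g_2 g_1^{-1}) = g_2 g_1^{-1}.
\]
That is, $g_2g_1^{-1}\in H$. Since $H$ is a subgroup, this is equivalent to $g_1g_2^{-1} = (g_2g_1^{-1})^{-1}\in H$. Every step is reversible, so both directions follow at once. (Alternatively, one could note that $\psi_\sigma(g_1)=\psi_\sigma(g_2)$ iff $e \qop g_1 = e \qop g_2$, and relate this to $Hg$ being a pole via \cref{prop:max_anti_GAlex}. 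The direct computation is cleaner.)

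For $(2)$, assume $H=\{e\}$ and that $X$ is finite. By $(1)$, $\psi_\sigma(g_1)=\psi_\sigma(g_2)$ forces $g_1g_2^{-1}=e$, so $\psi_\sigma\colon X\to P$ is injective. Since $P\subset G = X$ and $X$ is finite, injectivity gives $|G|\le |P|\le |G|$, hence $P=G$. By \cref{prop:conn_GAlex} the algebraic connected component of any $x$ is $xP = xG = G$, so there is a single component and $X$ is algebraically connected.

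I do not expect a serious obstacle. The only points needing care are the well-definedness of $\psi_\sigma$ (that its image lies in $P$), which is handled above, and the use of finiteness in the counting step, which is where the hypothesis that $X$ is finite enters.
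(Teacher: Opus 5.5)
Your proof is correct and follows the paper's argument: the same computation $\sigma(g_1^{-1})g_1=\sigma(g_2^{-1})g_2 \Rightarrow g_1g_2^{-1}\in H$ for (1), and the same injectivity-plus-counting argument $|X|\le|P|$ for (2). The differences are cosmetic: the paper gets the converse in (1) from the fact that $Hg_2$ is a pole (\cref{prop:max_anti_GAlex}(3)) rather than by reversing the steps, and you explicitly check $\psi_\sigma(X)\subset P$, which the paper leaves implicit.
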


\begin{proof}
    To show $(1)$, suppose that $\psi_\sigma(g_1) = \psi_\sigma(g_2)$.
    Thus, we have $\sigma(g_1^{-1})g_1 = \sigma(g_2^{-1})g_2$, and hence $\sigma(g_1g_2^{-1}) = g_1g_2^{-1}$.
    This shows $g_1g_2^{-1} \in H$.
    Conversely we suppose $g_1g_2^{-1} \in H$. 
    Then $\{g_1, g_2\}$ is a pole subset in $X$.
    Therefore, we have
    \[
        \psi_\sigma(g_1) = e \qop g_1 = e \qop g_2 = \psi_\sigma(g_2),
    \]
    as desired.

    Next, we show $(2)$.
    Since $P \subset X$ is the connected component containing $e$, it is enough to show that $X \subset P$.
    By $(1)$ and the assumption, the map $\psi_\sigma: X \to P$ is injective.
    Thus, we have $|X| \leq |P|$.
    Since $X$ is a finite set, this shows that $X \subset P$.
    This completes a proof.
\end{proof}

The rest of this paper, 
we present examples of quandles whose admissibility cannot be determined by previous results, but is established by our main theorem. The computations below can be verified using the computer algebra system GAP \cite{GAP4} and the package Rig \cite{Rig}.

\begin{ex}\label{ex:GAlex_admissible}
Let $G$ be the group given by \texttt{SmallGroup(54,8)} in GAP. Then $|G| = 54$. Up to conjugacy, $G$ has exactly two automorphisms of order $8$, neither of which is an inner automorphism. Let $\sigma$ be one of them (the following argument is independent of this choice). We show that the generalized Alexander quandle $\GAlex(G,\sigma)$ is admissible by our main theorem. Since $G$ is non-abelian and $\sigma$ has exactly two fixed points, admissibility of $\GAlex(G,\sigma)$ cannot be determined by previous results.

We verify that $H \cap P = \{e\}$ for $\GAlex(G,\sigma)$ using GAP. First, we compute $\sigma$ as follows:
\begin{verbatim}
gap> G := SmallGroup(54,8);
gap> CC := ConjugacyClasses(AutomorphismGroup(G));
gap> order8_auts := Filtered(CC, f -> Order(Representative(f)) = 8);
gap> sigma := Representative(order8_auts[1]);
\end{verbatim}

Next, we construct the quandle $\GAlex(G,\sigma)$ using the package Rig:
\begin{verbatim}
gap> LoadPackage("rig");
gap> Q := HomogeneousRack(G, sigma);
\end{verbatim}

The object $Q$ corresponds to the generalized Alexander quandle $\GAlex(G,\sigma)$. Its elements are represented by the integers $1,2,\dots,54$. 
The matrix \texttt{Q.matrix} describes the multiplication of $Q$, where the $(i,j)$-entry is $j \qop i$.

We compute
\begin{verbatim}
gap> H := PositionsProperty(Q.matrix, i -> i = Q.matrix[1]);
gap> P := SortedList(Orbit(InnerGroup(Q), 1));
gap> Intersection(H, P);
\end{verbatim}

The output is \texttt{[1]}, which implies $H \cap P = \{e\}$. Therefore, $\GAlex(G,\sigma)$ is admissible by Theorem~\ref{thm:main_thm}.
\end{ex}

\cite{Higashitani-2024-ClassificationGeneralizedAlexanderQuandlesa} provides a complete classification of generalized Alexander quandles of order less than $128$. Among the $8726$ cases in this classification, we find seven examples of the type studied in this paper, including the case in Example \ref{ex:GAlex_admissible}. 
For all of these seven cases, admissibility cannot be determined by previous results, while it is confirmed by our main theorem. Their group orders are $54, 54, 81, 81, 96, 108, 108$ in ascending order.